\tikzset{every loop/.style={min distance=10 mm, in=60, out=120, looseness=10}}
\newcommand{\bk}{\backslash}
\newtheorem{lemma}[]{Lemma}
\newtheorem{theorem}[lemma]{Theorem}
\title{A Note on Distinguishing Trees with the Chromatic Symmetric Function}
\author{Logan Crew\footnote{Department of Combinatorics \& Optimization, University of Waterloo, Waterloo, ON, N2L 3E9.\newline  Emails:  lcrew@uwaterloo.ca,  }}
\date{\today}
\begin{document}

\maketitle

\begin{abstract}

For a tree $T$, consider its smallest subtree $T^{\circ}$ containing all vertices of degree at least $3$. Then the remaining edges of $T$ lie on disjoint paths each with one endpoint on $T^{\circ}$. We show that the chromatic symmetric function of $T$ determines the size of $T^{\circ}$, and the multiset of the lengths of these incident paths. In particular, this generalizes a proof of Martin, Morin, and Wagner that the chromatic symmetric function distinguishes spiders.

\end{abstract}

\section{Introduction}

The chromatic symmetric function $X_G$ of a graph $G$ was introduced by Stanley in the 1990s \cite{stanley}, and has since been very well-studied due to its connections to other ares of mathematics, most notably algebraic geometry \cite{unit, categ}, and knot theory \cite{chmutov, noble}. 
One of the central unanswered questions driving research into the chromatic symmetric function is whether it distinguishes non-isomorphic trees. It has been verified computationally that the chromatic symmetric function distinguishes trees of up to $29$ vertices \cite{heil}, and many partial results have been discovered \cite{trees, loebl}. In a notable work, Martin, Morin, and Wagner \cite{martin} showed that the chromatic symmetric function of a tree contains the bivariate subtree polynomial of Chaudhary and Gordon \cite{subtreeorig}, and from this they were able to prove that certain infinite families of trees (spiders and some caterpillars) are distinguished by the chromatic symmetric function.

In this work, we show that the bivariate subtree polynomial provides some more general information, namely the size of the smallest subtree that contains all vertices of degree at least $3$, and the lengths of the paths (which must be the rest of the tree) that protrude from it. In particular, this generalizes the proof from \cite{martin} that spiders are distinguished by the chromatic symmetric function.

\section{Background}

A \emph{graph} $G = (V,E)$ consists of a \emph{vertex set} $V$ and an \emph{edge multiset} $E$ where the elements of $E$ are (unordered) pairs of (not necessarily distinct) elements of $V$. A \emph{simple graph} is a graph $G = (V,E)$ in which $E$ is a set with no repeated elements; all graphs in this paper are simple.  If $\{v_1,v_2\}$ is an edge, we will write it as $v_1v_2 = v_2v_1$.  The vertices $v_1$ and $v_2$ are the \emph{endpoints} of the edge $v_1v_2$. We will use $V(G)$ and $E(G)$ to denote the vertex set and edge set of a graph $G$, respectively.
  
For a vertex $v \in V(G)$, its \emph{degree} $d(v)$ is the number of times $v$ occurs as an endpoint of an edge in $E(G)$. If the vertices of $G$ are labelled $v_1, \dots, v_n$ such that $d(v_1) \geq \dots \geq d(v_n)$, the \emph{degree sequence} of $G$ is the tuple $(d(v_1), \dots, d(v_n))$.

  A \emph{subgraph} of a graph $G$ is a graph $G' = (V',E')$ where $V' \subseteq V$ and $E' \subseteq E|_{V'}$, where $E|_{V'}$ is the set of edges with both endpoints in $V'$.  An \emph{induced subgraph} of $G$ is a graph $G' = (V',E|_{V'})$ with $V' \subseteq V$.

  A \emph{path} in a graph $G$ is a nonempty sequence of edges $v_1v_2$, $v_2v_3$, \dots, $v_{k-1}v_k$ such that $v_i \neq v_j$ for all $i \neq j$. The vertices $v_1$ and $v_k$ are the \emph{endpoints} of the path. A \emph{cycle} in a graph is a nonempty sequence of distinct edges $v_1v_2$, $v_2v_3$, \dots, $v_kv_1$ such that $v_i \neq v_j$ for all $i \neq j$.

  A graph $G$ is \emph{connected} if for every pair of vertices $v_1$ and $v_2$ of $G$ there is a path in $G$ with $v_1$ and $v_2$ as its endpoints. The \emph{connected components} of $G$ are the maximal induced subgraphs of $G$ which are connected.
  
  A \emph{tree} is a connected graph containing no cycles. It is well-known that every tree with $n$ vertices has $n-1$ edges. A connected induced subgraph of a tree with at least one edge is called one of its \emph{subtrees}. A vertex of degree $1$ in a tree is called a \emph{leaf}.
  
  For a tree $T$, its \emph{bivariate subtree polynomial} $S_T(q,r)$, introduced by Chaudhary and Gordon \cite{subtreeorig}, is defined by
  $$
  S_T(q,r) = \sum_{\text{subtrees } S \text{ of } T} q^{|E(S)|}r^{|L(S)|}
  $$
  where $L(S)$ is the set of leaves of $S$ (which need not also be leaves in $T$).
  
  If $G = (V(G),E(G))$ be a graph, a map $\kappa: V(G) \rightarrow \mathbb{Z}^+$ is called a \emph{coloring} of $G$. This coloring is called \emph{proper} if $\kappa(v_1) \neq \kappa(v_2)$ for all $v_1,v_2$ such that there exists an edge $e = v_1v_2$ in $E(G)$. The \emph{chromatic symmetric function} $X_G$ of $G$ is defined as \cite{stanley}
  
  $$X_G(x_1,x_2,\dots) = \sum_{\kappa \text{ proper}} \prod_{v \in V(G)} x_{\kappa(v)}$$ 
  where the sum ranges over all proper colorings $\kappa$ of $G$.

    Martin, Morin, and Wagner \cite{martin} proved the following two results which we will require:

\begin{theorem}\label{thm:subtree}

For any tree $T$, its chromatic symmetric function $X_T$ determines its bivariate subtree polynomial $S_T$.

\end{theorem}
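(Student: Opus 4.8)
The plan is to pass from $X_T$ to the coefficients of its power-sum expansion, repackage these as a generating function that counts spanning forests by their component sizes together with a marked component, and then invert a binomial transform to recover the joint distribution of subtrees by size and boundary, from which $S_T$ follows by a pendant-leaf bijection.

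First I would invoke Stanley's expansion $X_G = \sum_{A \subseteq E(G)}(-1)^{|A|}p_{\lambda(A)}$, where $\lambda(A)$ records the sizes of the connected components of the spanning subgraph $(V,A)$. For a tree, $|A| = n - \ell(\lambda(A))$, so the sign attached to each $p_\mu$ is constant on the fibre $\lambda(A)=\mu$, and since the power sums are linearly independent, $X_T$ determines $N(\mu) := \#\{A \subseteq E(T) : \lambda(A) = \mu\}$ for every partition $\mu$ of $n$. In other words, $X_T$ knows, for each $\mu$, how many spanning forests of $T$ have component-size multiset $\mu$.

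The key step is to extract from the $N(\mu)$ the bivariate polynomial $G_T(q,z) = \sum_{\mu}N(\mu)(\sum_{p\in\mu}q^{p-1})z^{\ell(\mu)-1}$, which one recognizes as $\sum_{F}\sum_{C}q^{|V(C)|-1}z^{(\text{number of components of }F\text{ other than }C)}$ summed over spanning forests $F$ and their components $C$. Reorganizing the sum by the vertex set $U = V(C)$ of the marked component, a forest contributes exactly when it contains every edge of the (necessarily unique) subtree on $U$, omits every boundary edge of $U$, and restricts to an arbitrary forest on $T - U$. Because $T - U$ is a disjoint union of $c(T-U)$ subtrees, and a tree on $m$ vertices satisfies $\sum z^{(\#\text{components})}=z(z+1)^{m-1}$ when summed over its $2^{m-1}$ spanning forests, the inner sum factorizes and yields
$$G_T(q,z) = \sum_{U \text{ connected}} q^{|U|-1} z^{\,c(T-U)}(z+1)^{\,(n-|U|)-c(T-U)},$$
where $c(T-U)$ equals the number $d$ of boundary edges of $U$. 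Grouping connected sets by $(|U|,d)=(a,d)$ and writing $t_{a,d}$ for their number, the coefficient of $q^{a-1}$ is $(z+1)^{n-a}\sum_d t_{a,d}(z/(z+1))^d$; the substitution $w = z/(z+1)$ therefore inverts the transform and recovers every $t_{a,d}$ from $X_T$.

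Finally I would convert the (size, boundary) distribution $\{t_{a,d}\}$ into the (size, leaf) distribution defining $S_T$. The bijection sends a pair $(U,B)$, with $U$ connected and $B$ a set of boundary edges of $U$, to the subtree $S = U \cup B$ together with the set $W$ of new pendant leaves; since distinct boundary edges of a set in a tree reach distinct vertices, $W$ is an arbitrary subset of $L(S)$, and $\sum_{W \subseteq L(S)}(r-1)^{|W|} = r^{|L(S)|}$. Summing $q^{|E(S)|}(r-1)^{|W|}$ thus gives $\sum_{a,d}t_{a,d}q^{a-1}(1+q(r-1))^d$, which equals $S_T(q,r)$ after subtracting the finitely many explicit correction terms from degenerate cases ($|U|\le 1$, or $S=K_2$ with both endpoints chosen), each determined by $n$ and the degree sequence $\{t_{1,d}\}$. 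I expect the genuine obstacle to be the middle step: the number of leaves of a subtree is invisible in the component-size data of forests, and the whole argument hinges on the observation that it can nonetheless be accessed indirectly through the boundary statistic, whose generating function factorizes over the branches of $T-U$ and so is amenable to binomial inversion.
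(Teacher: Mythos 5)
The paper does not prove this statement at all---it is quoted from Martin, Morin, and Wagner \cite{martin}---so the only meaningful comparison is with their original argument, and your proposal is correct and essentially reconstructs it: Stanley's expansion together with the fact that $|A| = n-\ell(\lambda(A))$ on a tree yields the forest-type counts $N(\mu)$; the marked-component generating function, which factorizes as $z(1+z)^{m-1}$ over the branches of $T - U$ (using $c(T-U)=|\partial U|$, valid precisely because $T$ is a tree), is inverted to give the counts $t_{a,d}$ of connected vertex sets by size and boundary, your substitution $w = z/(1+z)$ being the generating-function form of the triangular binomial system appearing in their proof; and the pendant-edge bijection combined with $\sum_{W\subseteq L(S)}(r-1)^{|W|}=r^{|L(S)|}$ converts the boundary statistic into the leaf statistic. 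One piece of bookkeeping deserves correction: your two degenerate families are exactly the right ones, but they enter with opposite signs---the $n$ pairs with $|U|=1$ and $B=\emptyset$ (which produce a single vertex, not a subtree) must be subtracted, while the $n-1$ pairs with $S=K_2$ and both leaves chosen (which the bijection misses, since $U$ would be empty) must be added back---so the final identity reads
$$
S_T(q,r)=\sum_{a,d}t_{a,d}\,q^{a-1}\bigl(1+q(r-1)\bigr)^d \;-\; n \;+\; (n-1)\,q(r-1)^2,
$$
with both corrections determined by $n$ alone, which is the degree of $X_T$; this identity checks out on small cases such as $T=P_3$, so the argument is sound as written.
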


\begin{lemma}\label{lem:path}

The bivariate subtree polynomial of a tree $T$ determines its degree sequence.

\end{lemma}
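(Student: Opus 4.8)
The plan is to read off the degree sequence from a single family of coefficients of $S_T$, namely those on the diagonal $q^k r^k$. First I would record the basic dictionary: since $T$ is a tree, each subtree $S$ is determined by its vertex set, every connected vertex set of size at least $2$ yields a subtree, and such an $S$ satisfies $|E(S)| = |V(S)| - 1$ while $|L(S)|$ equals the number of degree-$1$ vertices of $S$. The decisive observation is that a subtree with $|E(S)| = |L(S)| = k$ has $k+1$ vertices of which exactly one is not a leaf; for $k \ge 2$ this forces $S$ to be a star $K_{1,k}$, with that unique internal vertex as its center. Hence for $k \ge 2$ the coefficient $[q^k r^k]\, S_T$ counts precisely the stars with $k$ edges in $T$.

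Next I would count these stars by their centers. A star with $k$ edges centered at a vertex $v$ is obtained by choosing $k$ of the $d(v)$ neighbors of $v$, so there are $\binom{d(v)}{k}$ of them, giving
$$
[q^k r^k]\, S_T = \sum_{v \in V(T)} \binom{d(v)}{k} \qquad (k \ge 2),
$$
and I denote this common value by $a_k$. To fill in the two missing values, note that $[q^1 r^2]\, S_T$ counts the single-edge subtrees, hence equals $|E(T)| = n - 1$; this recovers $n = |V(T)|$ and therefore $a_0 = \sum_v \binom{d(v)}{0} = n$ together with $a_1 = \sum_v d(v) = 2(n-1)$. Thus $S_T$ determines $a_k$ for every $k \ge 0$.

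Finally I would invert this to obtain the degrees. Writing $n_d$ for the number of vertices of $T$ of degree $d$, we have $a_k = \sum_d n_d \binom{d}{k}$, which are exactly the coefficients of $\sum_{v} (1+x)^{d(v)} = \sum_d n_d (1+x)^d$ in powers of $x$; equivalently, the substitution $y = 1 + x$ recovers the polynomial $\sum_d n_d\, y^{d}$, whose coefficient of $y^d$ is $n_d$. The map $(n_d) \mapsto (a_k)$ is unitriangular, since the largest $k$ with $a_k \neq 0$ is the maximum degree $\Delta$ with $a_\Delta = n_\Delta$, after which one solves for $n_{\Delta - 1}, n_{\Delta - 2}, \dots$ in turn; hence it is invertible and the multiset of degrees is determined. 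I do not anticipate a serious obstacle: the only real content is identifying the diagonal coefficients with stars, and thus with the binomial sums $a_k$, after which recovering the degree sequence is routine linear algebra.
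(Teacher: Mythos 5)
Your proof is correct: the diagonal coefficients $[q^k r^k]\,S_T$ for $k \ge 2$ do count exactly the star subtrees $K_{1,k}$, giving $\sum_v \binom{d(v)}{k}$, and the binomial (unitriangular) inversion recovering the degree multiset is sound, with the edge count $[q^1 r^2]\,S_T = n-1$ correctly supplying the number of leaves at the end. Note that the paper itself gives no proof of this lemma --- it is quoted from Martin, Morin, and Wagner --- and your argument is essentially the standard one appearing in that cited source, so there is nothing to flag.
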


\section{Main Result}

For a tree $T$ we define its \emph{trunk} $T^{\circ}$ to be the smallest connected induced subgraph that contains all vertices of degree $\geq 3$.  For every leaf $l$, we define its \emph{twig} $\hat{l}$ to be the longest path $P$ in $T$ containing $l$ such that every interior vertex (non-endpoint) of $P$ has degree $2$ in $T$.  We call a path of $T$ a twig if it is a twig for one of its leaves.  Thus, we may view any tree $T$ as the union of its trunk with its twigs, as shown in Figure \ref{fig:trunk_twigs}.

\begin{figure}[hbt]
\begin{center}
  \begin{tikzpicture}[scale=1.5]

  \node[fill=black, circle] at (0, 1)(v1){};
  \node[fill=black, circle] at (1, 1)(v2){};
  \node[fill=black, circle] at (1, 0)(v3){};
  \node[fill=black, circle] at (2, 1)(v4){};
  \node[fill=black, circle] at (2, 2)(v5){};
  \node[fill=black, circle] at (2, 0)(v6){};
  \node[fill=black, circle] at (3, 1)(v7){};
  \node[fill=black, circle] at (4, 1)(v8){};
  \node[fill=black, circle] at (4, 0)(v9){};
  \node[fill=black, circle] at (5, 1)(vt){};
  \node[fill=black, circle] at (6, 1)(ve){};

  \draw[black, dashed] (v1) -- (v2);
  \draw[black, dashed] (v2) -- (v3);
  \draw[black, thick] (v2) -- (v4);
  \draw[black, dashed] (v4) -- (v5);
  \draw[black, dashed] (v4) -- (v6);
  \draw[black, thick] (v4) -- (v7);
  \draw[black, thick] (v7) -- (v8);
  \draw[black, dashed] (v8) -- (v9);
  \draw[black, dashed] (v8) -- (vt);
  \draw[black, dashed] (vt) -- (ve);

  \end{tikzpicture}

\end{center}
\caption{A tree decomposed into its trunk (with thick edges) and twigs (represented by dashed edges).}
\label{fig:trunk_twigs}

\vspace{0.5cm}

\end{figure}
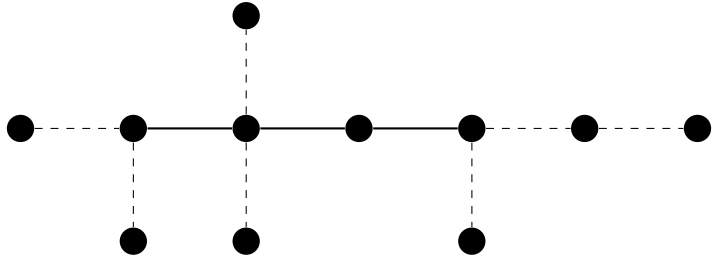

\begin{theorem}

From the bivariate subtree polynomial of a tree $T$ (and thus from its chromatic symmetric function $X_T$ by Theorem \ref{thm:subtree}), we can recover

\begin{itemize}
\item The size of $T^{\circ}$.
\item The lengths of all twigs of $T$.
\end{itemize}

\end{theorem}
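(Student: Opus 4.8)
The plan is to extract everything from a single coefficient of $S_T(q,r)$, namely the coefficient of the top power of $r$. First I would pin down that power. For any subtree $S$, the standard leaf-counting identity for trees gives
$$|L(S)| = 2 + \sum_{v \in V(S)} \max(\deg_S(v) - 2,\, 0).$$
Since $\deg_S(v) \le \deg_T(v)$ for every $v \in V(S)$, each summand is bounded by its value in $T$, so $|L(S)| \le |L(T)| =: L$, with equality exactly when $\deg_S(v) = \deg_T(v)$ for every vertex $v$ of degree $\ge 3$ in $T$. Thus $L$ is the largest power of $r$ occurring in $S_T$ (attained by $S = T$), and it is also read off from the degree sequence via Lemma~\ref{lem:path}; write $F(q) := [r^L]\,S_T(q,r)$.

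The key step is to identify the subtrees contributing to $F$. By the equality condition above, $S$ contributes if and only if it contains every branch vertex (degree $\ge 3$ in $T$) together with all of its incident edges. I would argue that such an $S$ is forced to contain the whole trunk $T^{\circ}$ — being connected and containing all branch vertices, it contains every path between them — as well as the first edge of every twig, and that conversely $S$ may extend each twig independently by an arbitrary prefix. Since each leaf $l$ determines a distinct twig $\hat l$, there are exactly $L$ twigs; writing their lengths as $\ell_1, \dots, \ell_L$, the twigs are pairwise vertex-disjoint apart from their branch endpoints, so the choices factor and
$$F(q) \;=\; q^{|E(T^{\circ})|} \prod_{i=1}^{L} \bigl(q + q^2 + \cdots + q^{\ell_i}\bigr).$$
Verifying this factorization carefully — in particular that no subtree outside this family attains $L$ leaves, and that the shortening of distinct twigs is genuinely independent — is the part I expect to require the most care, and it is the combinatorial heart of the argument.

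Granting the formula, recovery is routine. The lowest-degree term of $F$ is $q^{|E(T^{\circ})| + L}$, and since $L$ is known this yields $|E(T^{\circ})|$, hence the size of the trunk. Dividing out, $F(q)\,(1-q)^L / q^{|E(T^{\circ})| + L} = \prod_{i=1}^{L}(1 - q^{\ell_i})$ is a known polynomial, and I would recover the multiset $\{\ell_i\}$ from it by cyclotomic factorization: since $1 - q^{\ell} = -\prod_{d \mid \ell} \Phi_d(q)$ and the $\Phi_d$ are distinct irreducibles over $\mathbb{Q}$, the multiplicity $c_d := \#\{i : d \mid \ell_i\}$ equals the order of vanishing of the polynomial at a primitive $d$-th root of unity, and Möbius inversion $N_m = \sum_{k \ge 1} \mu(k)\, c_{mk}$ (a finite sum, as $c_d = 0$ for $d > \max_i \ell_i$) returns $N_m = \#\{i : \ell_i = m\}$, the full multiset of twig lengths. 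The degenerate case in which $T$ has no branch vertex — a path, detected from the degree sequence — is handled separately, the trunk being empty and the single twig having length $n-1$.
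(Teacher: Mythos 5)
Your proposal is correct, and while it rests on the same combinatorial core as the paper's proof, the execution is genuinely different and in places cleaner. Both arguments hinge on the identical characterization of the extremal subtrees: a subtree $S$ attains the maximum number $L$ of leaves exactly when it consists of the trunk $T^{\circ}$ together with a nonempty initial segment of each twig. The paper establishes this by a contradiction and case analysis (a missing branch vertex forces extra leaves of $T$; a missing edge at a branch vertex allows the leaf count to be increased), whereas you get it in one line from the degree-sum identity $|L(S)| = 2 + \sum_{v}\max(\deg_S(v)-2,0)$, which moreover shows that $L$ itself is the top $r$-degree of $S_T$, so Lemma~\ref{lem:path} is needed at most to detect the degenerate path case. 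The two proofs then diverge in how they extract the data: the paper fixes the leaf number $a = L$ and counts subtrees with $a$ leaves size by size, solving iteratively for the number of twigs of each length via an inductively defined (and never made explicit) count $f(t_1,\dots,t_k,a)$; you instead package the entire coefficient $[r^L]S_T(q,r)$ in the closed form $q^{|E(T^{\circ})|}\prod_{i=1}^{L}(q+\cdots+q^{\ell_i})$ and invert it algebraically --- trunk size from the lowest-degree term, twig lengths from the factorization of $\prod_{i}(1-q^{\ell_i})$. Your cyclotomic/M\"obius step is valid but heavier than necessary: in $\prod_i(1-q^{\ell_i})$ no cancellation can occur in degrees below $2m$ where $m=\min_i \ell_i$, so the first nonconstant term is $-N_m q^m$, and one can peel off the smallest twig length greedily, divide by $(1-q^m)^{N_m}$, and recurse. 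What your route buys is an explicit formula where the paper gives only a recursive determination (sidestepping the unspecified $f$ entirely), plus an honest treatment of the path case, where the number of twigs is one rather than the number of leaves --- a case the paper's ``leaves $=$ twigs'' step silently assumes away; what it costs is the obligation to verify the product formula carefully, which you correctly flag as the heart of the matter and for which your sketched justification (connectivity forces the trunk and twig prefixes, disjointness of twigs makes the choices independent) is the right one.
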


\begin{proof}

First, using Lemma \ref{lem:path}, we know the degree sequence of $T$, and thus we know how many leaves (and equivalently twigs) $T$ has; let $a$ be this number. 

 Note that the only way a subtree $S$ of $T$ can have exactly $a$ leaves is if it contains all vertices of degree at least $3$. For suppose otherwise, that we have such a subtree $S$ with $a$ leaves that doesn't contain some particular vertex $x$ of degree $\geq 3$. Then there is exactly one path connecting $x$ to $S$; in addition to the edge starting this path at $x$, there are at least $2$ further edges coming out of $x$, each of which leads to at least one leaf of $T$. Hence the connected component of $T \bk S$ containing $x$ has at least two leaves of $T$, and so $T$ has at least $a+1$ leaves, a contradiction. Thus, a subtree of $T$ with $a$ leaves contains $T^{\circ}$. Furthermore, such a subtree $S$ additionally contains every edge of $T$ adjacent to some vertex of degree at least $3$; for if $S$ does not contain some such edge adjacent to such a vertex $v$, then adding this edge increases the number of leaves of $S$ unless $v$ is itself a leaf of $S$, but then since $v$ has degree at least $3$ in $T$, there are at least two distinct edges we can add to $S$ adjacent to $v$ to increase the number of leaves of $S$.

Thus, every subtree of $T$ with $a$ leaves contains the subtree that includes all of $T^{\circ}$, plus one edge from every twig. Since clearly this subtree itself has exactly $a$ leaves, we conclude that it is the smallest subtree of $T$ having $a$ leaves. Now, we determine from the subtree polynomial the smallest value $v$ such that there exists a subtree $U$ of $T$ with $v$ vertices with $a$ leaves. From the above, it must be the case that $U$ is equal to the union of $T^{\circ}$ with one edge from each twig, and so $|T^{\circ}| = v-a$.

Now, for each $i$ let $t_i$ be the number of twigs of length $i$ in $T$.  We will evaluate the $t_i$ iteratively, starting with $t_1$. To do so, we consider the number of subtrees of $T$ with $v+1$ vertices and $a$ leaves. Clearly all such trees contain $U$, as well as one additional edge from some twig of $T$ with more than one edge. Thus, there will be $a-t_1$ such trees, so since we know $a$, we may compute $t_1$.

Continuing inductively, we assume that for a fixed $k$ we know $t_1, \dots, t_k$, and we therefore know that the remaining $a-\sum_{i=1}^k t_i$ twigs have length at least $k+1$. From the subtree polynomial, we determine the number $S(k)$ of subtrees of $T$ with $v+k+1$ vertices and $a$ leaves. Clearly all such trees contain $U$, as well as $k+1$ additional vertices from twigs. We split into two kinds of trees: those that take all $k+1$ vertices from a single twig, and those that do not. We may count the latter in terms of $t_1, \dots, t_k, a$, since we are guaranteed to be able to take at least $k$ vertices in addition to the initial one in $U$ from the twigs of undetermined length; let this number be $f(t_1, \dots, t_k, a)$. For the former, note that we have already taken one vertex from each twig in $U$, so we may only take an additional $k+1$ vertices from a twig of length at least $k+2$. Thus, we have
$$
S(k) = f(t_1, \dots, t_k, a)+\sum_{i=k+2}^{|V(G)|} t_i
$$
and so we may derive $\sum_{i=k+2}^{|V(G)|} t_i$. Thus, we have all the necessary information to compute
$$
a-\sum_{i=1}^k t_i-\sum_{i=k+2}^{|V(G)|} t_i = t_{k+1}.
$$

This process may be repeated to find all twig lengths.

\end{proof}

\section{Acknowledgments}

The author would like to thank Sophie Spirkl for helpful comments.

\bibliographystyle{plain}
\bibliography{bib}

\end{document}